\newtheorem{theorem}{Theorem}[section]
\newtheorem{corollary}[theorem]{Corollary}
\theoremstyle{definition}
\newtheorem{question}[theorem]{Question}
\numberwithin{equation}{section}
\newcommand\R {{\mathbb R}}
\newcommand\Z {{\mathbb Z}} 
\newcommand\CP {{\mathbb C\mathbb P}}
\DeclareMathOperator\sys{{\rm sys}}
\DeclareMathOperator\vol{{\rm vol}}
\DeclareMathOperator\area{{\rm area}}
\DeclareMathOperator\stsys{{\rm stsys}}
\DeclareMathOperator\LCG{{\rm LCG}}
\DeclareMathOperator\length{{\rm length}}
\author{Mikhail G. Katz} \address{M. Katz, Department of Mathematics,
Bar Ilan University, Ramat Gan 5290002 Israel}
\email{katzmik@macs.biu.ac.il}
\begin{document}

\thispagestyle{empty}

%\huge

\title [An inequality for complex projective plane]
{An inequality for length and volume in the complex projective plane}

\begin{abstract}
We prove a new inequality relating volume to length of closed
geodesics on area minimizers for generic metrics on the complex
projective plane.
We exploit recent regularity results for area minimizers by Moore and
White, and the Kronheimer--Mrowka proof of the Thom conjecture.
\end{abstract}

\keywords{Minimal surface; regularity; systole; closed geodesics;
Croke--Rotman inequality; Gromov's stable systolic inequality for
complex projective space; Kronheimer--Mrowka theorem}

\subjclass[2010]{Primary  53C23}

\maketitle
%\tableofcontents

\section{Introduction}
\label{s1}

The~$1$-systole~$\sys_1$ of a Riemannian manifold~$M$ is the least
length of a noncontractible loop in~$M$.  In the 1950s, Carl Loewner
proved an inequality relating the systole and the area of an arbitrary
metric on the~$2$-dimensional torus; see \cite{Pu52}, \cite{Gr99},
\cite{Ka07a}.

Gromov obtained a variety of inequalities relating the~$1$-systole and
the volume of~$M$.  We will now present those that will be used in
this paper.  Thus, we have the inequality
\begin{equation}
\label{e17}
\sys_1^2(S)\leq \frac43\area(S)
\end{equation}
from \cite[p.\;49, Corollary 5.2.B]{Gr83}, valid for every closed
aspherical surface~$S$.  There are also bounds that improve as the
genus~$g$ grows.  Thus, Gromov proved that a surface~$S_g$ of
genus~$g$ satisfies
\begin{equation}
\label{s12}
\sys_1^2(S_g) \leq \frac{64}{4\sqrt{g}+27} \area(S_g)
\end{equation}
(cf.\;\cite[p.\;1216]{Ka05}, \cite{Ko87}).  An asymptotically better
result is the following:
\begin{equation} 
\label{e13d}
\frac{\sys_1^2(S_g)}{\area} \leq \frac1\pi\, \frac {\log^2
g}{g}(1+o(1)) \mbox{ when } g\to \infty
\end{equation}
from \cite[p.\;1211, Theorem 2.2]{Ka05}, improving the multiplicative
constant in Gromov's similar upper bound.  Apart from the
multiplicative constant, the asymptotic behavior~$\frac{\log^2 g}{g}$
is the correct one due to the existence of arithmetic hyperbolic
surfaces satisfying \emph{lower} bounds of this type; see e.g.,
Buser--Sarnak~\cite{Bu94}, Katz et al.\ \cite{Ka07}, \cite{Ka11},
\cite{Ka16}.

The literature contains a number of results on the higher systoles, as
well.  Recall that the~$2$-systole~$\sys_2$ of~$M$ can be defined as
the least area of a homologically nontrivial surface in~$M$, or more
generally~$2$-cycle with integer coefficients:
\[
\sys_2(M)=\min \left\{ \area(S) \colon [S]\in H_2^{\phantom{I}}(M;\Z)
\setminus \{0\} \right\}.
\]
The~$2$-systole of~$M$ is typically not controlled by the volume
of~$M$, a phenomenon referred to as \emph{systolic freedom}.  For
example, the complex projective plane~$\CP^2$ admits metrics of
arbitrarily small volume such that every homologically nontrivial
surface in~$\CP^2$ has at least unit area; see Katz--Suciu
(\cite{Ka99}, 1999, Theorem\;1.1, p.\;113).

Gromov has also defined a modified invariant of~$M$ called the stable
$2$-systole,~$\stsys_2(M)$.  It is a result of Federer \cite{Fe69}
that the limits below exist and therefore can be used to
define~$\stsys_2$ as follows:
\begin{equation}
\label{e13c}
\stsys_2(M)=\min\left\{\lim_{n\to\infty}\tfrac{1}{n}\sys_2(n\alpha)
\colon\alpha\in H_2(M;\Z)\setminus \{\text{torsion}\} \right\}
\end{equation}
where~$\sys_2(x)$ denotes the infimum of areas of surfaces
representing the class~$x\in H_2(M;\Z)$.  Gromov's stable systolic
inequality for~$\CP^n$ asserts that
\begin{equation}
\label{e12}
\stsys_2^n(\CP^n) \leq n! \, \vol(\CP^n),
\end{equation}
for an arbitrary metric on~$\CP^n$.  A more general non-optimal
inequality appears in \cite[p.\;96, item 7.4.C]{Gr83}.  The optimal
inequality for $\CP^n$ is in \cite[p.\;262, Theorem\;4.36]{Gr99}.

The proof relies on the duality of the stable norm in homology and the
comass norm in cohomology; see \cite[Section 4.34, p.\;261]{Gr99},
\cite[Section\;4.10, p.\;380]{Fe74}, \cite[Lemma\;17]{Pa99},
\cite{Ba07}, \cite{Ka06}, \cite{Iv04}.  The 2-point homogeneous
Fubini--Study metric satisfies the case of equality in this sharp
inequality.

The~$1$-systole of a compact manifold~$M$ is always realized by a
closed geodesic in~$M$.  There exist inequalities for the least
length~$\LCG(M)$ of a nontrivial closed geodesic in~$M$.  One such
inequality is the Croke--Rotman inequality
\begin{equation}
\label{s15}
\LCG^2(S^2)\leq32\area(S^2)
\end{equation}
(see \cite{Cr88}, \cite{Ro06}), for an arbitrary metric on the
$2$-sphere.  The multiplicative constant~$32$ is not believed to be
optimal.  It is conjectured that the following tight bound holds:
\begin{equation}
\label{e16b}
\LCG^2(S^2)\leq 2\sqrt{3} \area(S^2)
\end{equation}
(see e.g., Burns--Matveev \cite[p.\;23]{Bu19}), with equality attained
by a triangular ``pillow.''

We will exploit these geometric inequalities to prove
Theorem~\ref{t11b} below.

Note that a degree~$d$ Veronese embedding~$\CP^1\to\CP^{n(d)}$ (by
degree~$d$ homogeneous polynomials in homogeneous coordinates) has
constant Gaussian curvature.  Namely, the embedding preserves the
Kahler form~$\omega$.  Hence it necessarily preserves the metric due
to the relation $\omega(X,Y)=g(JX,Y)$ (when the metrics are suitably
normalized).  Hence the Veronese embedding is an isometry.  Its image
has area~$\pi d$ when the sectional curvature of~$\CP^n$ is normalized
to satisfy \mbox{$1\leq K\leq 4$}.  The geodesics are the great
circles and therefore the~$\LCG$ of this degree~$d$ genus $0$ minimal
surface grows as~$\sqrt{d}$.  We show that such a phenomenon cannot
occur in~$\CP^2$ even if one allows arbitrary generic metrics.

\begin{theorem}
\label{t11b}
For a generic Riemannian metric on the complex projective
plane~$\CP^2$, every minimal surface~$S$ of sufficiently high
homological multiplicity admits a nontrivial closed geodesic of length
controlled by the total volume of the metric:~$\LCG^4(S)\leq C
\vol(\CP^2)$.
\end{theorem}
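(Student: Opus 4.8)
The plan is to take $S$ to be an area minimizer in the homology class $d\,\alpha$, where $\alpha$ generates $H_2(\CP^2;\Z)\cong\Z$ and the homological multiplicity $d$ is large, and to estimate $\LCG(S)$ by balancing a lower bound for the genus of $S$ against an upper bound for its area. The crucial feature is that both bounds scale linearly in $d$, in such a way that the dependence on $d$ cancels.

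I would first record that for a generic metric the minimizer $S$ is, by the regularity theory of Moore and White, a smooth embedded orientable surface; I expect this to be the main obstacle, since the genericity hypothesis is precisely what is needed to rule out branch points and self-intersections and thereby make $S$ an honest smooth submanifold of $\CP^2$. Granting embeddedness, connectedness of $S$ comes for free from the topology of $\CP^2$: its intersection form is positive definite with $\alpha\cdot\alpha=1$, so two homologically nontrivial closed surfaces in classes $d_1\alpha$ and $d_2\alpha$ have intersection number $d_1d_2>0$ and hence cannot be disjoint, while a null-homologous component could be discarded so as to decrease area, contradicting minimality. Thus the embedded minimizer $S$ is a single connected smooth surface in the class $d\,\alpha$.

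Next I would extract the genus. Applying the Kronheimer--Mrowka solution of the Thom conjecture to the connected smooth embedded surface $S\subset\CP^2$ in the class $d\,\alpha$ gives $g(S)\geq\tfrac{(d-1)(d-2)}{2}$, so that $\sqrt{g(S)}\geq \tfrac{d}{\sqrt2}(1+o(1))$ as $d\to\infty$. For the area, I would use that $S$ minimizes area in $d\,\alpha$ and that $H_2(\CP^2;\Z)$ is generated by $\alpha$, so that $\tfrac1d\area(S)=\tfrac1d\sys_2(d\,\alpha)$ converges to the stable systole $\stsys_2(\CP^2)$; combined with Gromov's inequality~\eqref{e12} in the form $\stsys_2^2(\CP^2)\leq 2\vol(\CP^2)$, this yields $\area(S)\leq d\bigl(\sqrt{2\vol(\CP^2)}+o(1)\bigr)$ for all large $d$.

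Finally, since $g(S)\geq1$ the surface $S$ is aspherical and its $1$-systole is realized by a noncontractible closed geodesic, whence $\LCG(S)\leq\sys_1(S)$. Feeding the genus and area estimates into Gromov's genus-dependent inequality~\eqref{s12}, the factors of $d$ cancel:
\[
\LCG^2(S)\leq\frac{64}{4\sqrt{g(S)}+27}\,\area(S)
\leq\frac{64}{4\,(d/\sqrt2)}\cdot d\sqrt{2\vol(\CP^2)}\,(1+o(1))
=32\sqrt{\vol(\CP^2)}\,(1+o(1)),
\]
so that $\LCG^4(S)\leq C\,\vol(\CP^2)$ for a suitable constant $C$ (close to $1024$) and all sufficiently high multiplicities $d$. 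Using instead the sharper asymptotic bound~\eqref{e13d} one would even obtain $\LCG(S)\to0$ as $d\to\infty$ for a fixed metric.
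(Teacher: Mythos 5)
Your overall skeleton---genus lower bound from Kronheimer--Mrowka, area upper bound from Federer's limit plus Gromov's stable systolic inequality \eqref{e12}, and cancellation of the multiplicity $d$ inside the genus-dependent bound \eqref{s12}---is exactly the engine of the paper's proof, and your closing arithmetic is fine. But there is a genuine gap at the step you yourself flagged as ``the main obstacle'' and then waved through: you invoke Moore--White to conclude that the minimizer in $d\alpha$ is a smooth embedded surface for every large $d$. Theorem~\ref{t12} applies only to a \emph{simple} (multiplicity~$1$) minimizer; genericity rules out branch points and self-intersections of simple minimizers, but it does not rule out multiplicity. The area-minimizing integral current in $d\alpha$ could perfectly well be $d$ times the minimizer in $\alpha$ (as happens for the Fubini--Study metric, where by Wirtinger's inequality the line taken with multiplicity $d$ is minimizing in its class), or more generally a sum of multiply covered pieces. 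In that case there is no embedded surface in class $d\alpha$, the support may have genus~$0$, and Kronheimer--Mrowka gives nothing---this is precisely the Veronese-type degeneration the theorem is meant to exclude. Your intersection-form argument does not close this hole: it shows that an already-embedded minimizer cannot have two disjoint components of positive degree, but it says nothing about multiply covered components, which are not disjoint surfaces.

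The paper's proof is organized around exactly this difficulty. It fixes $\epsilon>0$ and splits into cases. If $\sys_2(2[\CP^1])\leq 2(1+\epsilon)\stsys_2(\CP^2)$, no large multiplicity is needed at all: the minimizers for $n=1,2$ have area controlled by $\stsys_2(\CP^2)$, and Croke--Rotman \eqref{s15} (sphere case) or Gromov \eqref{e17} (aspherical case) bounds the LCG, with no embeddedness or genus input. Otherwise, one takes the \emph{least} $n\geq3$ with $\sys_2(n[\CP^1])\leq(1+\epsilon)\,n\stsys_2(\CP^2)$, which exists by Federer's limit \eqref{e13c}. Minimality of $n$, together with the failure of the corresponding inequality for $n=1,2$, forces the minimizer in $n[\CP^1]$ to be connected and simple: any nontrivial decomposition into pieces of smaller degrees (with multiplicities) would, by additivity of area, have total area exceeding $(1+\epsilon)\,n\stsys_2(\CP^2)\geq\sys_2(n[\CP^1])$, a contradiction. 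Only after this is established do Moore--White and then Kronheimer--Mrowka apply, and from there the computation you wrote goes through. So what is missing is not a technicality: it is the selection of a special multiplicity $n$ at which simplicity of the minimizer can be certified, rather than an arbitrary large $d$.
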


Specific values for the constant $C$ are discussed below.

\begin{corollary}
For a generic metric of unit volume on~$\CP^2$, there exist
homologically nontrivial minimal surfaces~$S$ with a nontrivial closed
geodesic of length at most~$C$, with constant~$C$ independent of the
metric.
\end{corollary}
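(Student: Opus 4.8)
The plan is to realize $\LCG(S)$ through the intrinsic systole of a high-genus minimal surface and then play an area bound (growing linearly in the homological multiplicity) against a genus bound (growing quadratically). Write $\alpha$ for the positive generator of $H_2(\CP^2;\Z)\cong\Z$ and fix a generic metric. For a large integer $d$ I would take $S$ to be an area minimizer in the class $d\alpha$. The regularity theory of Moore and White guarantees that for a generic metric this minimizer is a smooth embedded minimal surface, so that $S$ is a genuine closed surface carrying its induced metric. Since the shortest noncontractible loop of a closed surface of positive genus is a nontrivial closed geodesic, one has $\LCG(S)\le\sys_1(S)$, and it suffices to bound the intrinsic $1$-systole of $S$.

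Next I would assemble the two competing estimates. For the area, the stable systolic inequality \eqref{e12} with $n=2$ gives $\stsys_2^2(\CP^2)\le 2\vol(\CP^2)$; since $\sys_2(d\alpha)$ is subadditive in $d$, Fekete's lemma identifies $\stsys_2(\CP^2)=\inf_d\tfrac1d\sys_2(d\alpha)$, whence for all sufficiently large $d$ one gets $\area(S)=\sys_2(d\alpha)\le d\bigl(\sqrt{2\vol(\CP^2)}+o(1)\bigr)$. For the genus, the Kronheimer--Mrowka solution of the Thom conjecture asserts that a smoothly embedded surface in the class $d\alpha$ has genus at least $\tfrac{(d-1)(d-2)}2$; in particular $g(S)\ge\tfrac{(d-1)(d-2)}2$, so that $\sqrt{g(S)}\ge\tfrac{d}{\sqrt2}(1+o(1))$.

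Finally I would feed these into Gromov's genus-dependent bound \eqref{s12}, whose decay rate $1/\sqrt g$ is exactly matched to the problem. Substituting the two estimates,
\[
\sys_1^2(S)\le\frac{64}{4\sqrt{g(S)}+27}\,\area(S)\le\frac{64}{4\cdot\frac{d}{\sqrt2}+27}\,d\sqrt{2\vol(\CP^2)}\,(1+o(1))\longrightarrow 32\,\sqrt{\vol(\CP^2)}
\]
as $d\to\infty$. Hence $\LCG^2(S)\le\sys_1^2(S)\le 32\sqrt{\vol(\CP^2)}\,(1+o(1))$, and squaring yields $\LCG^4(S)\le C\,\vol(\CP^2)$ with $C=1024$ after absorbing the $o(1)$ by taking the multiplicity large; the fourth power is forced by scale invariance, since both sides scale quadratically under a homothety. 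Replacing \eqref{s12} by the asymptotically sharper \eqref{e13d} would even drive $\LCG(S)\to 0$, and invoking sharper surface inequalities lowers the admissible $C$.

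The main obstacle is the regularity and topology input rather than the inequality juggling: one must ensure that for a generic metric the area minimizer in $d\alpha$ is a smooth, embedded, and ideally connected surface, since the Kronheimer--Mrowka genus bound applies to embedded surfaces and is cleanest for connected ones. This is precisely where the Moore--White theory enters, and ruling out (or coping with) a disconnected minimizer---showing that splitting into low-degree, low-genus components is area-inefficient once $d$ is large, despite the fact that a single copy of $\alpha$ may have large area by systolic freedom---is the delicate point that the genericity together with the stable-norm convergence must resolve.
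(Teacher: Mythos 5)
Your proposal follows the paper's high-multiplicity strategy, but it has a genuine gap at exactly the point you flag in your final paragraph and then leave unresolved: nothing guarantees that an area minimizer in $d[\CP^1]$ is simple (multiplicity one) and connected for an arbitrary large $d$. The Moore--White theorem (Theorem~\ref{t12}) is conditional: \emph{if} the minimizer is simple, \emph{then} it is smoothly embedded for generic metrics; it does not produce simple minimizers. Likewise Kronheimer--Mrowka applies only to smoothly embedded connected surfaces. This is not a technicality that ``genericity together with stable-norm convergence'' can be expected to fix. If, say, $\sys_2([\CP^1])=\stsys_2(\CP^2)$ --- a situation your argument does not exclude, generic or not --- then subadditivity forces $\sys_2(d[\CP^1])=d\,\sys_2([\CP^1])$ for \emph{every} $d$, so the multiplicity-$d$ copy of the degree-one minimizer is area-minimizing in $d[\CP^1]$ for every $d$; it may be a (branched) sphere with multiplicity, the genus bound is then vacuous, and your chain of inequalities collapses. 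Your own closing remark that \eqref{e13d} would ``drive $\LCG(S)\to0$'' is the red flag: if your argument were complete it would prove arbitrarily short closed geodesics on minimizers for every generic unit-volume metric, a far stronger statement than the paper makes --- precisely because simplicity and connectedness cannot be claimed for all large $d$.

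The paper closes this gap with a dichotomy that your proposal lacks. Either the low multiples are already nearly stable-norm efficient (inequality \eqref{e13b}, resp.\ \eqref{e24}): then the minimizer in $n[\CP^1]$, $n=1,2$, has area controlled by $\vol(\CP^2)$ via \eqref{e12}, but it may well be a sphere, which is why the Croke--Rotman inequality \eqref{s15} (or its conjectural sharp form \eqref{e16b}) --- entirely absent from your proposal --- is needed to bound $\LCG$ in that case. Or these inequalities fail: then one takes the \emph{least} $n\ge3$ with $\sys_2(n[\CP^1])\le(1+\epsilon)\,n\stsys_2(\CP^2)$, as in \eqref{e13}. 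It is this minimality, not largeness of $n$, that forces any minimizer in $n[\CP^1]$ to be simple and connected: a multiple cover or disconnected decomposition would have area at least the sum of $\sys_2$ over classes of smaller multiplicity, each of which exceeds $(1+\epsilon)$ times the corresponding multiple of $\stsys_2$ by the failure of \eqref{e24} and the minimality of $n$, giving total area strictly greater than $(1+\epsilon)\,n\stsys_2(\CP^2)\ge\sys_2(n[\CP^1])$, a contradiction. Only then do Moore--White and Kronheimer--Mrowka apply, and the computation you performed (whose arithmetic is correct and essentially matches the paper's, including the asymptotic constant) goes through. In short: your estimates are fine, but the surface to which they are applied must be produced by the paper's two-case argument, not by simply letting $d\to\infty$.
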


Thus, while the total volume does \emph{not} control the area of a
homologically nontrivial minimizing surface, it does control the
length of a shortest closed geodesic on the minimizer.  

More detailed versions of the theorem with explicit constants appear
in Section~\ref{s2}.  

Our techniques do not enable us to find closed geodesics of controlled
length for an arbitrary metric on~$\CP^2$.  The existence of closed
geodesics in compact Riemannian manifolds was proved by Fet
\cite{Fe52}, \cite{Fe53}, but it is unknown whether there exists a
closed geodesic of length controlled by the volume.  Sabourau
\cite{Sa04} proves the existence of a geodesic loop of length
controlled by the volume; see also Rotman \cite[Theorem\;0.3]{Ro19}
for related results.  Sabourau \cite{Sa19} constructs a one-cycle
sweepout of the essential sphere in the complex projective space
(endowed with an arbitrary Riemannian metric) whose one-cycle length
is controlled by the volume.  A generalisation appears in \cite{Na20}.

We will use the following result of Moore \cite[p.\;279,
Theorem\;5.1.1]{Mo17}, \cite[Theorem\;2]{Mo20} and
White~\cite[Corollary\;39]{Wh19}, exploiting Moore \cite{Mo06, Mo07}.

\begin{theorem}[Moore; White]
\label{t12}
For a generic metric on~$M$, a simple (multiplicity~$1$) area
minimizer~$S$ in a homology class in~$H_2(M;\Z)$ is a smoothly
embedded surface.
\end{theorem}

Briefly, by Morgan's result \cite{Mo82}, in the orientable case the
tangent planes to a minimizer~$S$ at a self-intersection~$p\in M$ must
necessarily be complex lines of a common complex structure on~$T_p M$,
and it can be shown that such a situation cannot arise for a generic
conformal factor.

The nonorientable case (for homology classes with~$\Z_2$ coefficients)
was treated in White's earlier paper \cite{Wh85}.  Our
Theorem~\ref{t11b} can be contrasted with the following open
questions in the case of~$\Z_2$ coefficients.
\begin{enumerate}
\item
Is the area of a minimizing surface~$S\in[\CP^1]\in H_2(\CP^2;\Z_2)$
controlled by the total volume: 
\[
\area(S)\leq C \sqrt{\vol(\CP^2)} \quad ?
\]
\item
Is the length of a shortest closed geodesic~$\gamma$ on a minimizing
surface~$S\in [\CP^1]\in H_2(\CP^2;\Z_2)$ controlled by the total
volume: 
\[
\length(\gamma) \leq C \sqrt[4]{\vol(\CP^2)} \quad ?
\]
\end{enumerate}
Note that a minimizing surface~$S\in[\CP^1]\in H_2(\CP^2;\Z_2)$ can be
nonorientable.  In such a situation, smoothness results of White still
apply, but there is no analog of the stable 2-systolic inequality
\eqref{e12}.

Our proof combines the regularity results by Moore and White with the
celebrated Kronheimer--Mrowka proof of the Thom conjecture (see
Section~\ref{s32}), to obtain upper bounds on the closed geodesic as
above.  Possible generalisations are discussed in Section~\ref{s32}.

\section{Closed geodesics on area minimizers}
\label{s2}

One possible tool in investigating geometric inequalities is the
existence of minimizing surfaces in homology classes.  Such existence
is guaranteed by Geometric Measure Theory; see Chang (\cite{Ch88},
1988), De Lellis et al.\ \cite{De15, De17, De18}.  It is in the nature
of the techniques used that the geometry of the minimizer is rather
inexplicit.  Geometric inequalities (such as \eqref{e11} below) can be
viewed as providing some control over the geometry of the minimizer.

While the volume of a metric on~$\CP^2$ does not control the
$2$-systole (as discussed in Section~\ref{s1}), it turns out that the
volume does control suitable 1-dimensional invariants (such as the
LCG) of area minimizing surfaces~$S\subseteq\CP^2$.

\begin{theorem}
\label{t11}
Consider a generic metric on~$\CP^2$.  Then for~$n$ sufficiently
large, an area-minimizing surface~$S\in n[\CP^1]$ satisfies
\begin{equation}
\label{e11}
\LCG(S) \leq 8\sqrt2 \sqrt[4]{2!\vol(\CP^2)}.
\end{equation}
\end{theorem}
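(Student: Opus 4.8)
The plan is to combine four ingredients: the regularity of the minimizer, a genus lower bound from the Thom conjecture, an area upper bound from the stable systolic inequality, and Gromov's genus-dependent systolic inequality applied to the minimizer itself. The mechanism is a competition of growth rates: the genus of the minimizer grows \emph{quadratically} in~$n$ while its area grows only \emph{linearly}, so that the ratio controlling the systole stays bounded.

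First I would invoke Theorem~\ref{t12}: for the generic metric, the area-minimizing surface~$S$ in the class~$n[\CP^1]\in H_2(\CP^2;\Z)$ is smoothly embedded. Writing~$g=g(S)$ for its genus, I would apply the Kronheimer--Mrowka proof of the Thom conjecture to the smoothly embedded surface~$S$ representing~$n$ times the positive generator, obtaining the quadratic lower bound
\[
g(S)\geq \tfrac{1}{2}(n-1)(n-2),
\]
so that~$\sqrt{g}\geq \tfrac{1}{\sqrt2}\,n\,(1+o(1))$ as~$n\to\infty$. Next I would bound the area from above. Since~$S$ minimizes area in its class, $\area(S)=\sys_2(n[\CP^1])$. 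By the definition~\eqref{e13c} of the stable~$2$-systole together with Gromov's inequality~\eqref{e12} in the case~$n=2$, which gives~$\stsys_2(\CP^2)\leq\sqrt{2!\,\vol(\CP^2)}$, one has
\[
\area(S)=\sys_2(n[\CP^1])\leq n\,\stsys_2(\CP^2)\,(1+o(1))\leq n\sqrt{2!\,\vol(\CP^2)}\,(1+o(1)).
\]

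These estimates are then fed into the genus-dependent inequality~\eqref{s12}. On a surface of positive genus the~$1$-systole is realized by a noncontractible closed geodesic, hence~$\LCG(S)\leq\sys_1(S)$, and therefore
\[
\LCG^2(S)\leq\sys_1^2(S)\leq\frac{64}{4\sqrt{g}+27}\,\area(S)\leq\frac{64}{4\sqrt{g}}\,\area(S).
\]
Substituting the quadratic genus bound and the linear area bound, the factors of~$n$ cancel, and the right-hand side tends to a fixed multiple of~$\sqrt{\vol(\CP^2)}$ (the clean limit being~$\LCG^2(S)\leq 32\sqrt{\vol(\CP^2)}\,(1+o(1))$). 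This limiting value lies comfortably below~$\bigl(8\sqrt2\bigr)^2\sqrt{2!\,\vol(\CP^2)}$, so after absorbing the~$(1+o(1))$ factor for~$n$ sufficiently large one obtains~$\LCG(S)\leq 8\sqrt2\,\sqrt[4]{2!\,\vol(\CP^2)}$, as claimed; the stated constant is deliberately generous, leaving room for the lower-order terms and the suppressed~$27$.

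The main obstacle I expect is ensuring that the Thom conjecture genuinely applies, i.e.\ that the minimizer is a smoothly embedded \emph{connected} surface of the expected multiplicity. The regularity in Theorem~\ref{t12} concerns multiplicity-one minimizers, whereas our class carries homological multiplicity~$n$; one must rule out that the minimizer degenerates, for instance into disjoint genus-zero spheres, for which the quadratic genus bound fails and for which only the weaker Croke--Rotman inequality~\eqref{s15} is available---and that inequality carries no volume control, since the~$2$-systole itself is not controlled by volume (systolic freedom). Verifying that for a generic metric and large~$n$ the efficient competitor is a single high-genus embedded surface, so that~$\sqrt g$ really grows like~$n$, is the delicate point on which the entire estimate rests.
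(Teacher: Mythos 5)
Your quantitative core coincides with the engine of the paper's argument (it is the large-$n$ branch of the proof of Theorem~\ref{t22}, to which the proof of Theorem~\ref{t11} defers): White's regularity, the Kronheimer--Mrowka bound $g\geq\frac12(n-1)(n-2)$, Gromov's genus-dependent inequality \eqref{s12}, and the stable systolic inequality \eqref{e12}, with the quadratic-genus-versus-linear-area cancellation you describe; the arithmetic in that case is sound. However, the issue you relegate to your final paragraph is not a verification to be ``ensured'' --- it is a genuine gap, and it is precisely the part of the proof that requires a separate idea. Theorem~\ref{t12} applies only to \emph{simple} (multiplicity-one) minimizers, and Kronheimer--Mrowka requires a \emph{connected} smoothly embedded surface. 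For a fixed generic metric there is no reason why the minimizer in $n[\CP^1]$ should be connected and simple for all large $n$: if, say, $\sys_2(n[\CP^1])=n\,\sys_2([\CP^1])$ for every $n$ (which is perfectly consistent with Federer's limit \eqref{e13c}), the minimizer may well be $n$ parallel copies of a fixed sphere for every $n$, and your growth-rate competition never starts. This is also why no argument can prove the connected-high-genus picture for \emph{all} sufficiently large $n$; one must choose $n$ judiciously, and the paper's result is effectively an existence statement over $n$ (as in its corollary).

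The paper fills the gap with a dichotomy rather than a uniform claim. Either $\sys_2(2[\CP^1])\leq 4\stsys_2(\CP^2)$ (inequality \eqref{e13b}, or its $\epsilon$-version \eqref{e24}); then for $n=1,2$ the area of a minimizer is volume-controlled via \eqref{e12}, and Croke--Rotman \eqref{s15} (or \eqref{e17} for aspherical components) already yields $\LCG^2(S)\leq 2^7\stsys_2(\CP^2)$, hence \eqref{e11} --- no regularity and no Thom conjecture needed. Note that in this branch systolic freedom is not the obstruction you fear, because the relevant $2$-systole is then comparable to the stable one. Or \eqref{e24} fails; then one takes the \emph{least} $n\geq3$ with $\sys_2(n[\CP^1])\leq(1+\epsilon)n\stsys_2(\CP^2)$, which exists by \eqref{e13c}. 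Minimality of $n$ (together with the failure of \eqref{e24}, which also forces $\sys_2([\CP^1])>(1+\epsilon)\stsys_2(\CP^2)$ since $2\sys_2([\CP^1])\geq\sys_2(2[\CP^1])$) is exactly what rules out degeneration: a minimizer splitting into components $S_i\in k_i[\CP^1]$ with multiplicities $m_i$ and $\sum m_ik_i=n$ would have area at least $\sum m_i\sys_2(k_i[\CP^1])>(1+\epsilon)n\stsys_2(\CP^2)\geq\sys_2(n[\CP^1])$, a contradiction. Only after this step do Theorem~\ref{t12} and Kronheimer--Mrowka become applicable, at which point your computation goes through. In short: your estimates are correct once the minimizer is known to be connected and simple, but the case analysis and the least-$n$ device that secure this are the actual missing content, not a routine check.
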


Note that we left the factor~$2!=2$ under the radical sign in
inequality~\eqref{e11} as an allusion to Gromov's stable systolic
inequality~\eqref{e12}, used in the proof of \eqref{e11}.

\begin{corollary}
For a generic metric of unit volume on~$\CP^2$, there exist
homologically nontrivial minimal surfaces with a closed geodesic of
length at most~$8\cdot2^{3/4}$.
\end{corollary}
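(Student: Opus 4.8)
Since $8\sqrt2\,\sqrt[4]{2}=8\cdot2^{3/4}$, the corollary is precisely the case $\vol(\CP^2)=1$ of Theorem~\ref{t11}, so it suffices to prove that theorem; I sketch its proof, which carries all the content. The plan is to play an area \emph{upper} bound against a genus \emph{lower} bound. As the multiplicity $n$ grows, Gromov's stable systolic inequality~\eqref{e12} forces an area minimizer in $n[\CP^1]$ to have area growing only linearly in $n$, whereas the Kronheimer--Mrowka resolution of the Thom conjecture forces any smoothly embedded representative of $n[\CP^1]$ to have genus growing quadratically in $n$. A surface that is simultaneously of low area and high genus must carry a short noncontractible loop, which is realized by a closed geodesic; this is exactly the mechanism absent for the genus-$0$ Veronese surfaces of the introduction, whose $\LCG$ grows like $\sqrt d$.

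First I would record the area bound. Taking $n=2$ in~\eqref{e12} gives $\stsys_2(\CP^2)\le\sqrt{2!\,\vol(\CP^2)}$, and since $\tfrac1n\sys_2(n[\CP^1])\to\stsys_2(\CP^2)$ by~\eqref{e13c}, for all sufficiently large $n$ the minimizer $S\in n[\CP^1]$ satisfies $\area(S)=\sys_2(n[\CP^1])\le 2n\,\stsys_2(\CP^2)\le 2n\sqrt{2\vol(\CP^2)}$. Next, Theorem~\ref{t12} (Moore--White) ensures that for a generic metric the minimizer $S$ is smoothly embedded, so I may write $S=\bigsqcup_i S_i$ with each $S_i$ connected and embedded, $[S_i]=d_i[\CP^1]$, $d_i\ge1$, and $\sum_i d_i=n$. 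By the Kronheimer--Mrowka theorem each component satisfies $g(S_i)\ge\frac{(d_i-1)(d_i-2)}2$; in particular a component of degree $d_i\le2$ is a $2$-sphere, while a component of degree $d_i\ge3$ has positive genus.

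I would then split into two cases. If every component has degree at most $2$, then $S$ is a disjoint union of at least $n/2$ embedded spheres of total area at most $2n\sqrt{2\vol}$, so the smallest sphere $S_0$ among them has $\area(S_0)\le4\sqrt{2\vol}$; the Croke--Rotman inequality~\eqref{s15} applied to $S_0$ then yields a nontrivial closed geodesic with $\LCG^2(S)\le\LCG^2(S_0)\le32\,\area(S_0)\le128\sqrt{2\vol}$, that is $\LCG(S)\le8\sqrt2\,\sqrt[4]{2\vol}$. This is exactly the asserted bound, the constant $8\sqrt2=\sqrt{128}$ recording the Croke--Rotman constant $32$ together with the factor $2$ from the component count and the factor $2$ from the area bound. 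In the complementary case some component $S_j$ has positive genus, and I would apply the genus-refined inequality~\eqref{s12} to the induced metric on $S_j$. Since $\sys_1(S_j)$ is realized by a noncontractible, hence nontrivial, closed geodesic, one has $\LCG(S)\le\LCG(S_j)\le\sys_1(S_j)$, and~\eqref{s12} together with $g(S_j)\ge\frac{(d_j-1)(d_j-2)}2$ bounds $\sys_1(S_j)$ strictly below $8\sqrt2\,\sqrt[4]{2\vol}$.

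The step I expect to be the main obstacle is the area control in the second case: the bound $\area(S)\le2n\sqrt{2\vol}$ controls the \emph{sum} of the component areas but not an individual $\area(S_j)=\sys_2(d_j[\CP^1])$, which by systolic freedom need not be controlled by the volume when $d_j$ is small. I would resolve this with a counting argument based on the sharper estimate $\sys_2(n[\CP^1])=(1+o(1))\,n\,\stsys_2(\CP^2)$: since $\area(S_i)\ge d_i\,\stsys_2(\CP^2)$ for every $i$, all but a vanishing fraction of the total degree must lie in ``efficient'' components with $\area(S_i)\le(1+o(1))\,d_i\,\stsys_2(\CP^2)$, and it suffices to run the dichotomy on these. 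For an efficient component the ratio $\area(S_i)/\sqrt{g(S_i)}$ is then bounded in terms of $\stsys_2(\CP^2)$ alone (the extremal case being $d_i=3$), which is exactly what lets~\eqref{s12} produce a volume-controlled bound on $\sys_1(S_i)$ at every positive-genus efficient component, while the efficient spheres stay small enough for~\eqref{s15}. Tracking the $(1+o(1))$ factors so that the clean constant $8\sqrt2$ survives for all sufficiently large $n$ is the delicate bookkeeping; the conceptual engine of the whole argument is the quadratic genus growth furnished by the Kronheimer--Mrowka theorem.
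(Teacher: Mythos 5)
Your reduction of the corollary to Theorem~\ref{t11} with $\vol(\CP^2)=1$ is correct, and your architecture --- decompose the minimizer in $n[\CP^1]$ for large $n$, then play the linear area growth coming from \eqref{e12} and \eqref{e13c} against the quadratic genus growth from Kronheimer--Mrowka --- is genuinely different from the paper's. But it has a gap at exactly the point where the paper's proof does its real work. Theorem~\ref{t12} (Moore--White) applies only to \emph{simple}, multiplicity-one minimizers, and for large $n$ the area minimizer in $n[\CP^1]$ need not be simple: it could, for instance, be the current $n\cdot T$ with $T$ a minimizer in $[\CP^1]$, or more generally $\sum_i m_iT_i$ with multiplicities $m_i>1$. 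So the step ``$S$ is smoothly embedded, so I may write $S=\bigsqcup_i S_i$ with $[S_i]=d_i[\CP^1]$, $d_i\geq1$, $\sum_i d_i=n$'' is not justified as written. To repair it you would need: a GMT decomposition of the minimizing current into indecomposable pieces; the observation that each piece, taken with multiplicity one, is itself a simple minimizer in its own class (so that Theorem~\ref{t12} applies to it); multiplicities $m_i$ in all the bookkeeping (your ``at least $n/2$ components'' becomes $\sum_i m_i\geq n/2$, which fortunately still works); and control of pieces of negative degree, which for large $n$ cannot be excluded outright, only shown to carry a small fraction of the mass. The paper avoids every one of these issues with the one idea your proposal lacks: it takes the \emph{least} $n\geq3$ satisfying $\sys_2(n[\CP^1])\leq(1+\epsilon)\,n\stsys_2(\CP^2)$, and uses the failure of this efficiency at all smaller degrees (including $n=1,2$) to force the minimizer at that particular $n$ to be connected and simple \emph{before} invoking Moore--White; Kronheimer--Mrowka is then applied to a single connected embedded surface, and no decomposition, dichotomy, or component count is ever needed.

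A second, independent error: ``a component of degree $d_i\leq2$ is a $2$-sphere'' is false. Kronheimer--Mrowka is a \emph{lower} bound $g\geq\tfrac12(d-1)(d-2)$, vacuous for $d\leq2$; nothing prevents a minimizing component of degree $1$ or $2$ from having positive genus. (The valid implication is the contrapositive: an embedded sphere has degree at most $2$.) This is repairable, since a small-area component of positive genus is aspherical and satisfies Gromov's inequality \eqref{e17}, whose constant $4/3$ beats the Croke--Rotman constant $32$; but then your case split must be run on the genus of the small-area component, exactly as in the paper's own ``sphere vs.\ aspherical'' alternative, rather than deduced from its degree. Finally, a minor point: in your efficiency argument the extremal case of $\frac{64\,d}{2\sqrt{2}(d-2)+27}$ is not $d=3$ but $d\to\infty$, where the ratio increases to $16\sqrt2$; the constants still close, since $\sqrt{16\sqrt2\,}<8\sqrt2$, but the bookkeeping you flag as delicate points in the opposite direction from the one you indicate.
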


\begin{proof}[Proof of Theorem~\ref{t11}]
We first consider the two values~$n=1,2$.  Suppose we have a bound
\begin{equation}
\label{e13b}
\sys_2(2[\CP^1])\leq 4\stsys_2(\CP^2).
\end{equation}
Then the area of a minimizing surface~$S\in n[\CP^1]$,~$n=1,2$, is
controlled by the volume of~$\CP^2$ via Gromov's stable systolic
inequality~\eqref{e12}.  If~$S$ is the sphere, then~$\LCG(S)$ is
controlled by the area of~$S$ via the Croke--Rotman inequality
\eqref{s15}.  If~$S$ is aspherical, then even better bounds exist such
as Gromov's inequality \eqref{e17}.  By \eqref{e13b} and~\eqref{s15},
in either case we have
\[
\LCG^2(S)\leq2^7\stsys_2(\CP^2).
\]
It follows that
\[
\LCG^{4}(S)\leq2^{14}\stsys_2^2(\CP^2)
\leq2^{14}2!\vol(\CP^2),
\]
proving inequality~\eqref{e11} in these cases.

It remains to consider the case when~\eqref{e13b} is violated; in
other words,
$\sys_2(2[\CP^1])>4\stsys_2(\CP^2)$.  The rest of the proof is as in
the proof of the sharper bound of Theorem~\ref{t22} below.
\end{proof}

\section
{Inequality in the assumption of the tight Croke--Rotman inequality}
\label{s32}

\begin{theorem}
\label{t22}
Suppose the tight Croke--Rotman inequality \eqref{e16b} holds.  Then
given a generic metric on~$\CP^2$, for~$n$ sufficiently large, an
area-minimizing surface~$S\in n[\CP^1]$ satisfies
\begin{equation}
\label{e11d}
\LCG(S) \leq 2^{\frac74}_{\phantom{I}} \sqrt[4]{2!\vol(\CP^2)}.
\end{equation}
\end{theorem}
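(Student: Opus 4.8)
The plan is to run the dichotomy already introduced in the proof of Theorem~\ref{t11}, but with the tight Croke--Rotman constant of \eqref{e16b} in place of the constant~$32$ of \eqref{s15}. The low-multiplicity branch, where $\sys_2(2[\CP^1])\le 4\stsys_2(\CP^2)$ holds, is treated exactly as in Theorem~\ref{t11}: for $n=1,2$ the minimizer is either a sphere, to which I feed \eqref{e16b} via $\LCG^2(S)\le 2\sqrt3\,\area(S)\le 2\sqrt3\,\sys_2(2[\CP^1])$, or it is aspherical, in which case the even stronger bound \eqref{e17} applies; in both cases \eqref{e13b} together with \eqref{e12} turns the area estimate into a volume estimate. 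The whole content is the complementary branch, where \eqref{e13b} is violated, and here I would combine three ingredients: genericity, through which area minimizers are smoothly embedded (Theorem~\ref{t12}); the Kronheimer--Mrowka resolution of the Thom conjecture, which forces high genus; and Gromov's genus-dependent systolic bounds \eqref{s12} and \eqref{e13d}.

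First I would record that, for a generic metric, a minimizer $S\in n[\CP^1]$ is smoothly embedded by Theorem~\ref{t12}, and that each connected component of $S$ is itself area-minimizing in its own multiple of $[\CP^1]$; hence $\LCG(S)$ is at most the least $\LCG$ among the components, and it suffices to control one well-chosen component. Writing $\sigma(n)=\sys_2(n[\CP^1])$, subadditivity yields $\sigma(n)/n\to\stsys_2(\CP^2)$, while the violated inequality $\sigma(2)>4\stsys_2(\CP^2)$ records that degree~$1$ and degree~$2$ pieces are strictly inefficient, costing more than $2\stsys_2(\CP^2)$ per unit of homology. I would use this to argue that, for $n$ large, the minimizer cannot be assembled out of low-degree pieces alone and must contain a component whose degree $d$ grows with $n$.

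For such a high-degree component the Kronheimer--Mrowka theorem gives genus $g\ge\tfrac{(d-1)(d-2)}{2}$, so $g\to\infty$, while its area is at most $\sigma(d)\sim d\,\stsys_2(\CP^2)$. Substituting these two facts into the asymptotic bound \eqref{e13d} (or, for an explicit non-asymptotic estimate, into \eqref{s12}) shows that $\sys_1^2$ of this component stays controlled in terms of $\stsys_2(\CP^2)$, and in fact tends to~$0$; since the $1$-systole is realized by a noncontractible closed geodesic, this already forces $\LCG(S)\le 2^{\frac74}\sqrt[4]{2!\vol(\CP^2)}$ for all large~$n$, with room to spare, and the remaining stable bound $\stsys_2^2(\CP^2)\le 2!\vol(\CP^2)$ of \eqref{e12} converts this into the stated volume estimate. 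The only spherical components that can persist have degree~$1$ or~$2$, and these are handled by the tight inequality \eqref{e16b}, their area being controlled through \eqref{e12}; it is precisely this use of \eqref{e16b} rather than \eqref{s15} that yields the improved exponent~$2^{7/4}$.

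The hard part will be the component-structure step. Converting the inefficiency inequality $\sigma(2)>4\stsys_2(\CP^2)$ into a genuine lower bound on the degree of some component of the large-$n$ minimizer, and ruling out the pathological possibility of many bounded-degree aspherical pieces (for which neither the Thom genus bound nor the stable systolic inequality is by itself decisive), is the delicate point. Here I would lean on the exact minimality of each component together with the convergence $\sigma(n)/n\to\stsys_2(\CP^2)$: a configuration of uniformly bounded degrees would have area-per-multiplicity bounded away from $\stsys_2(\CP^2)$ in the violated regime, contradicting the convergence for large~$n$, and this should force the appearance of a component of degree $d\to\infty$, after which \eqref{e13d} closes the argument.
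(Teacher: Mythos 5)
Your overall skeleton (a dichotomy on the efficiency of low multiples, then regularity plus Kronheimer--Mrowka plus genus-sensitive systolic bounds in the inefficient case) is the paper's, but two of your steps fail: one numerically, one structurally. The low-multiplicity branch does not close at the threshold~$4$ of \eqref{e13b}. Feeding $\sys_2(2[\CP^1])\le 4\stsys_2(\CP^2)$ into the tight Croke--Rotman inequality gives
\[
\LCG^2(S)\le 2\sqrt3\cdot 4\,\stsys_2(\CP^2),\qquad\text{hence}\qquad
\LCG^4(S)\le 192\,\stsys_2^2(\CP^2)\le 192\cdot 2!\vol(\CP^2),
\]
and $192>2^7=128$: your constant is $192^{1/4}\approx 3.72$, worse than the required $2^{7/4}\approx 3.36$. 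This is precisely why the paper runs the dichotomy at the sharper threshold \eqref{e24}, $\sys_2(2[\CP^1])\le 2(1+\epsilon)\stsys_2(\CP^2)$: then $\LCG^2(S)\le 4\sqrt3(1+\epsilon)\stsys_2(\CP^2)$ and $\bigl(4\sqrt3(1+\epsilon)\bigr)^2=48(1+\epsilon)^2<2^7$ for small $\epsilon$. The threshold is not a free parameter; it must be small enough for the Croke--Rotman branch and large enough for the complementary branch, and $4$ violates the first requirement once \eqref{e16b} is the tool.

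Second, and more seriously, your resolution of what you call the hard part is based on a false claim. The violated inequality $\sys_2(2[\CP^1])>4\stsys_2(\CP^2)$ constrains only degrees $1$ and $2$; it says nothing about $\sys_2(d[\CP^1])$ for $d\ge3$. Since subadditivity gives $\sys_2(n[\CP^1])\ge n\stsys_2(\CP^2)$ for all $n$, it is consistent with your case hypothesis that, say, $\sys_2(3[\CP^1])=3\stsys_2(\CP^2)$; then for every $k$ the current consisting of $k$ copies of a degree-$3$ minimizer realizes $\sys_2(3k[\CP^1])=3k\stsys_2(\CP^2)$, its area per unit of multiplicity is exactly $\stsys_2(\CP^2)$, and no component of degree tending to infinity ever appears. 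So the claimed contradiction (``uniformly bounded degrees force area-per-multiplicity bounded away from $\stsys_2$'') is false, and the step that is supposed to produce $d\to\infty$ and hence activate \eqref{e13d} collapses. The paper sidesteps component analysis altogether: it takes the \emph{least} $n\ge3$ with $\sys_2(n[\CP^1])\le(1+\epsilon)\,n\stsys_2(\CP^2)$ (such $n$ exists by \eqref{e13c}) and observes that a minimizer in \emph{that} class must be connected and simple, since any splitting into pieces of degree $<n$ would cost more than $(1+\epsilon)\stsys_2(\CP^2)$ per unit of degree --- by minimality of $n$ for degrees $\ge3$, and by the violated \eqref{e24} for degrees $1,2$. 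Simplicity is what licenses White's Theorem~\ref{t12}, hence smooth embeddedness, hence the Kronheimer--Mrowka genus bound; this is a further point your decomposition leaves open, since components of a minimizing current can occur with multiplicity, to which Theorem~\ref{t12} does not apply. With the paper's choice of $n$ no growing degree is needed: Gromov's \eqref{s12} at that $n$, with \eqref{e13d} invoked only for large $n$, finishes the proof.
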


In other words, we have 
\begin{equation}
\label{lcg}
\LCG(S)\leq4\,\sqrt[4]{\vol(\CP^2)}.
\end{equation}

\begin{corollary}
For a generic metric of unit volume on~$\CP^2$, in the assumption of
\eqref{e16b}, there exist homologically nontrivial minimal surfaces
with a closed geodesic of length at most~$4$.
\end{corollary}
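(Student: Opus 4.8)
The plan is to exploit a competition between two growth rates: as the multiplicity~$n$ increases, the genus of the area minimizer grows \emph{quadratically} while its area grows only \emph{linearly}, so that its shortest closed geodesic is forced to be short. First I would pin down the topology of the minimizer. For a generic metric, Theorem~\ref{t12} makes the area minimizer~$S\in n[\CP^1]$ a smoothly embedded surface. Since the intersection form of~$\CP^2$ satisfies~$[\CP^1]\cdot[\CP^1]=1$, any two disjoint embedded components in classes~$a_i[\CP^1]$ and~$a_j[\CP^1]$ have vanishing geometric, hence homological, intersection number~$a_ia_j=0$; discarding null-homologous components (which only lowers the area) therefore leaves~$S$ connected. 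The Kronheimer--Mrowka proof of the Thom conjecture then gives~$g(S)\geq\tfrac{(n-1)(n-2)}{2}$, so~$g(S)\to\infty$; in particular~$S$ is aspherical for~$n\geq3$, whence its systolic loop is a closed geodesic and~$\LCG(S)\leq\sys_1(S)$.

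Next I would control area and systole separately. The area equals~$\sys_2(n[\CP^1])$, and by Federer the ratios~$\tfrac1n\sys_2(n[\CP^1])$ decrease to~$\stsys_2(\CP^2)$, which is bounded by~$\sqrt{2\,\vol(\CP^2)}$ through Gromov's stable systolic inequality~\eqref{e12}; thus~$\area(S)\leq n\,\sqrt{2\,\vol(\CP^2)}\,(1+o(1))$. Feeding this into Gromov's genus-dependent systolic inequality~\eqref{s12}, whose factor~$\tfrac{64}{4\sqrt{g}+27}\sim\tfrac{16\sqrt2}{n}$ cancels the linear growth of the area, already yields~$\sys_1^2(S)\leq C\,\stsys_2(\CP^2)\leq C\sqrt{2\,\vol(\CP^2)}$ for an explicit~$C$; and the sharper asymptotic bound~\eqref{e13d}, with~$g\sim n^2/2$ and~$\area(S)\sim n\,\stsys_2(\CP^2)$, even drives~$\sys_1^2(S)$ to~$0$. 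Hence~$\LCG^4(S)$ is controlled by a fixed multiple of~$\vol(\CP^2)$, and for~$n$ sufficiently large the clean estimate~$\LCG(S)\leq 2^{7/4}\sqrt[4]{2\,\vol(\CP^2)}=4\sqrt[4]{\vol(\CP^2)}$ holds with room to spare.

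The tight Croke--Rotman inequality~\eqref{e16b} enters in the complementary low-genus regime. The Thom bound permits a spherical minimizer only for~$n=1,2$, and there I would split on whether~$\sys_2(2[\CP^1])\leq 4\stsys_2(\CP^2)$: in the first alternative the stable systolic inequality bounds the area and~\eqref{e16b} replaces~\eqref{s15} to sharpen the constant of Theorem~\ref{t11}, while the second alternative is absorbed into the large-$n$ argument above. The main obstacle is the quantitative cancellation of the middle paragraph: it is essential that Kronheimer--Mrowka supplies a \emph{quadratic} lower bound on the genus, since with only the aspherical bound~\eqref{e17} the linear growth of~$\area(S)=\sys_2(n[\CP^1])$ would not be absorbed and no uniform control on~$\LCG(S)$ would follow. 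Securing the genericity and the connectedness of~$S$ in the first step is the other delicate point.
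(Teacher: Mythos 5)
Your overall strategy (quadratic genus growth from Kronheimer--Mrowka beating linear area growth) is indeed the paper's strategy, but your proposal has a genuine gap at its foundation: you never establish that the area minimizer~$S\in n[\CP^1]$ is \emph{simple}, i.e.\ of multiplicity one. Theorem~\ref{t12} applies only to simple minimizers, and for~$n\geq2$ an area-minimizing integral current in~$n[\CP^1]$ need not be simple: it can be~$n$ times a minimizing sphere in~$[\CP^1]$ (for the Fubini--Study metric, the~$n$-fold line is calibrated, hence minimizing in~$n[\CP^1]$, alongside the smooth degree-$n$ curve). In that scenario there is no embedded surface to which Kronheimer--Mrowka applies, the genus does not grow, and your cancellation never starts. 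Your connectedness argument via the intersection form does not repair this: it presupposes that the components are disjoint \emph{embedded} surfaces, which is exactly what regularity (hence simplicity) is supposed to provide, and it cannot exclude components occurring with multiplicity. The paper's proof is built around precisely this point: it introduces the dichotomy~\eqref{e24} with threshold~$2(1+\epsilon)$, and when \eqref{e24} fails it picks the \emph{least}~$n\geq3$ satisfying~\eqref{e13}. Minimality of~$n$ (together with the failure of~\eqref{e24}, which covers~$k=1,2$) forces $\sys_2(k[\CP^1])>(1+\epsilon)k\stsys_2(\CP^2)$ for all~$k<n$, so any decomposition of a minimizer in~$n[\CP^1]$ into lower-multiplicity pieces would have area exceeding $(1+\epsilon)n\stsys_2(\CP^2)\geq\sys_2(n[\CP^1])$, a contradiction; only after this is the minimizer known to be simple and connected, and only then do Moore--White and Kronheimer--Mrowka apply. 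Without this selection of~$n$, your ``large-$n$'' regime is simply unavailable.

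There is also a quantitative failure: your dichotomy threshold~$4$ (borrowed from Theorem~\ref{t11}) does not deliver the constant~$4$. In your first alternative, $\sys_2(2[\CP^1])\leq4\stsys_2(\CP^2)$ combined with~\eqref{e16b} and~\eqref{e12} gives
\[
\LCG^4(S)\leq\bigl(2\sqrt3\bigr)^2\bigl(4\stsys_2(\CP^2)\bigr)^2
\leq 192\cdot2\vol(\CP^2)=384\vol(\CP^2),
\]
i.e.\ $\LCG(S)\leq 384^{1/4}\approx 4.43$ at unit volume, which overshoots~$4$. This is why the paper's threshold is~$2(1+\epsilon)$ rather than~$4$: that branch then yields $96^{1/4}(1+\epsilon)^{1/2}<4$ for small~$\epsilon$, while the least-$n$ branch yields $2^{7/4}(1+\epsilon)^{1/2}\bigl(2\vol(\CP^2)\bigr)^{1/4}\to4\vol(\CP^2)^{1/4}$ as $\epsilon\to0$. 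Two smaller inaccuracies: Federer's theorem gives convergence of $\tfrac1n\sys_2(n[\CP^1])$ to its infimum, not monotone decrease; and the honest asymptotic constant from~\eqref{s12} with $\sqrt g\geq(n-2)/\sqrt2$ is $16\sqrt2/n$, not $8\sqrt2/n$, which by itself only gives $2^{5/2}\vol(\CP^2)^{1/4}>4\vol(\CP^2)^{1/4}$ --- so, as you correctly sense at the end, the asymptotic bound~\eqref{e13d} is genuinely needed in the large-$n$ regime, not merely an optional improvement.
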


Note that for the Fubini--Study metric on~$\CP^2$, one
has
\[
\frac{\LCG}{\sqrt[4]{\vol}}=\frac{\pi}{\sqrt[4]{\pi^2/2}}
=\sqrt{\pi\sqrt{2}},
\]
not far from the value of the constant in \eqref{lcg}.

\begin{proof}[Proof of Theorem~\ref{t22}]
We first consider the two values~$n=1,2$.  Let~$\epsilon>0$ (to be
specified later).  Suppose we have the bound
\begin{equation}
\label{e24}
\sys_2(2[\CP^1])\leq 2(1+\epsilon)\stsys_2(\CP^2).
\end{equation}
Then the area of a minimizing surface~$S\in n[\CP^1]$,~$n=1,2$, is
controlled by the volume of~$\CP^2$ via Gromov's
inequality~\eqref{e12}.  If~$S$ is the sphere, then~$\LCG(S)$ is
controlled by the area of~$S$ via the tight Croke--Rotman inequality
\eqref{e16b} by the hypothesis of our theorem.  If~$S$ is aspherical,
then even better bounds exist such as Gromov's inequality~\eqref{e17}.
In either case, by \eqref{e16b} and \eqref{e24} we have
\[
\LCG^2(S)\leq 4\sqrt{3}(1+\epsilon) \stsys_2(\CP^2).
\]
Gromov's stable systolic inequality~\eqref{e12} then gives
\[
\begin{aligned}
\LCG^{4}(S) &\leq \left(4\sqrt3(1+\epsilon)\right)^2
\stsys_2^2(\CP^2) \\&\leq \left(4\sqrt3(1+\epsilon)\right)^2
2!  \vol(\CP^2).
\end{aligned}
\]
Thus,
\begin{equation}
\label{e25}
\LCG(S)\leq2\sqrt[4]{3}\sqrt{1+\epsilon}\sqrt[4]{2!\vol(\CP^2)}.
\end{equation}
For~$\epsilon$ sufficiently small, \eqref{e25} implies
inequality~\eqref{e11d} in these cases.

Thus, we can assume that inequality \eqref{e24} is violated; in other
words,
\[
\sys_2(2[\CP^1])>2(1+\epsilon)\stsys_2(\CP^2).
\]
In this case we will exploit higher multiples of the class~$[\CP^1]$
in order to prove the bound \eqref{e11d}.  Let~$n\geq3$ be the least
value such that
\begin{equation}
\label{e13}
\sys_2(n[\CP^1]) \leq (1+\epsilon)n\stsys_2(\CP^2).
\end{equation}
Such an~$n$ exists due to the existence of the limit in~\eqref{e13c}.
Then an area minimizer~$S\in n[\CP^1]$ is simple (of multiplicity~$1$)
and connected.  By White's Theorem~\ref{t12} for generic metrics,~$S$
is smoothly embedded.  For such surfaces, the genus~$g$ satisfies
\begin{equation}
\label{e16}
g\geq\frac12(n-1)(n-2)
\end{equation}
by Kronheimer--Mrowka \cite{Kr94}.  Since~$n\geq3$, the
bound~\eqref{e16} implies that the surface~$S$ is aspherical, and
that~$g\geq \frac{(n-2)^2}{2}$ and therefore
\begin{equation}
\label{e14}
\sqrt g\geq \frac{n-2}{\sqrt{2}}.
\end{equation}
Now bounds~\eqref{s12} and \eqref{e14} imply that
\begin{equation}
\label{e19}
\begin{aligned}
\sys_1(S)^2 &\leq \frac{64}{\frac{4}{\sqrt{2}}(n-2)+27}\area(S) \\&=
\frac{64}{\frac{4n}{\sqrt{2}} -4\sqrt2+27}\area(S)
\\&<\frac{8\sqrt{2}}{n}\area(S).
\end{aligned}
\end{equation}
Therefore by \eqref{e13} and \eqref{e19}, we have
\[
\sys_1^2(S)\leq\frac{8\sqrt{2}\area(S)}{n}
\leq8\sqrt{2}(1+\epsilon)\stsys_2(\CP^2).
\]
Thus
\[
\begin{aligned}
\sys_1^{4}(S) &\leq \big(8\sqrt2(1+\epsilon)\big)^2 \stsys_2^2(\CP^2)
\\&\leq \big(8\sqrt2(1+\epsilon)\big)^2 \,2!\vol(\CP^2)
\end{aligned}
\]
by Gromov's stable systolic inequality.  Thus
\[
\sys_1(S)\leq2^{\frac74}_{\phantom{I}} (1+\epsilon)^{\frac12}
\big(2!\vol(\CP^2)\big)^{\frac14}
\]
for generic metrics on~$\CP^2$ in this case.  As~$\epsilon\to0$ we
obtain the bound of Theorem~\ref{t22} with a constant arbitrarily
close to the required one.  In principle~$n$ (and therefore also a
connected simple minimizer~$S\in n[\CP^1]$) could depend
on~$\epsilon$.  However, for sufficiently large~$n$ we can exploit the
bound~\eqref{e13d} with better asymptotic behavior than Gromov's
bound~\eqref{s12}, proving the inequality.
\end{proof}

\begin{question}
Does the inequality admit a generalisation for~$2$-dimen\-sional
classes with positive self-intersection for arbitrary Riemannian
metrics on Kahler manifolds exploiting \cite{Mo96}, or even on
symplectic manifolds using \cite{Oz00} ?
\end{question}

\section*{Acknowledgments}

The author is grateful to Alex Nabutovsky for suggesting the question
of possible inequalities relating 1-dimensional invariants and the
volume of~$\CP^2$.

\end{document}